\DeclareMathOperator{\st}{st}
\DeclareMathOperator{\Aut}{Aut}
\newcommand{\Z}{\mathbb{Z}}
\newcommand{\N}{\mathbb{N}}
\newtheorem{theorem}{Theorem}[section]
\newtheorem{lemma}[theorem]{Lemma}
\newtheorem{proposition}[theorem]{Proposition}
\newtheorem{corollary}[theorem]{Corollary}
\theoremstyle{definition}
\newtheorem{definition}[theorem]{Definition}
\theoremstyle{remark}
\newtheorem{remark}[theorem]{Remark}
\title[On the concept of fractality for some groups]{On the concept of fractality for groups  \\[3mm] of automorphisms of a regular rooted tree}
\author[J.\ Uria-Albizuri]
{Jone Uria-Albizuri}
\date{}
\thanks{The author is supported by the Basque Government research project IT753-13 and by the Basque Goverment predoctoral grant PRE-2014-1-347.}
\begin{document}

\maketitle

\begin{abstract}
   The aim of this article is to discuss and clarify the notion of fractality for subgroups of the group of automorphisms of a regular rooted tree. For this purpose we define three types of fractality. We show that they are not equivalent, by giving explicit examples. Furthermore we present some tools that are helpful in order to determine the fractality of a given group.
\end{abstract}

\section{Introduction}

The subgroups of the group of automorphisms of the $d$-adic tree $T$ (i.e.\ a regular rooted tree with $d$ branches going down at every vertex) are an important source of groups with interesting properties. For example, finitely generated torsion infinite groups can be constructed easily, giving a negative answer to the General Burnside Problem. The large amount of articles about this topic in the last years shows their interest.

Given a subgroup $G$ of $\Aut T$ the section of an element $g\in G$ at a vertex $u$ is an automorphism which represents how $g$ acts on the subtree of $T$ hanging from the vertex $u$ (the formal definition is given in Section 2).  We say that $G$ is \emph{self-similar\/} if for each element $g\in G$ and each vertex $u\in T$ the section of $g$ at the vertex $u$ belongs to $G$ again. This is a natural property that a majority of the most interesting subgroups of $\Aut T$ possess. 

It is usual to work with vertex and level stabilizers of $G$, i.e.\ the subgroups of all automorphisms in $G$ that fix a vertex $u$ or a whole level $L_n$ of the tree, denoted by $\st_G(u)$ and $\st_G(L_n)$, respectively. Then one can consider the homomorphisms $\psi_u$, which sends each $g\in\st_G(u)$ to its section at the vertex $u$, and $\psi_n$, which sends each $g\in\st_G(L_n)$ to the $d^n$-tuple of its sections at the $n$-th level. Observe that in these cases the sections are just the restrictions to the corresponding subtrees.

If $G$ is the whole group $\Aut T$, then the homomorphisms $\psi_u$ and $\psi_n$ are surjective onto $\Aut T$ and
$\Aut T \times\overset{d^n}{\dots}\times \Aut T$, respectively. 
On the other hand, if $G$ is self-similar then the images of $\psi_u$ and $\psi_n$ are contained in $G$ and $G\times \overset{d^n}{\dots}\times G$, and we will consider these sets to be the codomains of those maps.
It is natural to ask whether $\psi_u$ and $\psi_n$ are also onto in this case. For many interesting groups, $\psi_u$ is known to be onto, i.e. $\psi_u(\st_G(u))=G$ for each $u\in T$, and the group $G$ is then called \emph{fractal\/}, recurrent or self-replicating (see \cite{Bru, Gri2}). However, in general it is too strong to ask $\psi_n$ to be surjective, and we content ourselves with the image of $\psi_n$ being a subdirect product of $G\times \overset{d^n}{\dots}\times G$, namely that $\psi_u(\st_G(L_n))=G$ for each $u\in L_n$.
In some papers, this condition is only required for $n=1$; however, as we shall see, it is not always inherited by the rest of the levels.
Thus it is necessary to make a distinction between these two concepts.
Following terminology from previous papers, $G$ is said to be \emph{strongly fractal\/} or strongly self-replicating if
$\psi_x(\st_G(L_1))=G$ for all $x\in X$.
Then we say that $G$ is \emph{super strongly fractal} if $\psi_u(\st_G(L_n))=G$ for each $n\in \N$ and $u\in L_n$.

Obviously, every super strongly fractal group is also strongly fractal, and every strongly fractal group is fractal, but there is some confusion in the literature about the converse. In several papers, fractal groups are claimed to be the same as strongly fractal groups, or else fractal groups are simply introduced by using the definition of strongly fractal groups (see \cite{Bar1,Bru,Don2,Dah,Don}).  In some other papers, a distinction is made between these two concepts (see \cite{Bar2,Gri2}), but no examples can be found in the literature where a certain fractal group is shown not to be strongly fractal.
On the other hand, strongly fractal and super strongly fractal groups have not been clearly distinguished either.
Since a self-similar group that acts transitively on each level can be checked to be fractal by looking only at the vertices on the first level, one may think that the same holds for the property of being strongly fractal, see for example the paragraph after Definition 3.6 in \cite{Gri2}. This would mean that being strongly fractal and super strongly fractal are equivalent.
However, as we shall see, this is not the case.

Our aim in this article is to fill this gap.
On the one hand, for every $d\ge 3$, we give explicit examples of groups that are fractal but not strongly fractal.
More specifically, we show that a certain subgroup of the Hanoi Towers group is of this type. We remark that the restriction to $d\ge 3$ is necessary for these examples to exist, since one can easily show that for $d=2$ a fractal group is always strongly fractal.
In proving that those groups are not strongly fractal, we have obtained a couple of results that allow us to estimate the image of a level stabilizer under $\psi_u$, which may have some interest of their own.
On the other hand, we also give examples of groups which are strongly fractal but not super strongly fractal, and examples of super strongly fractal groups.
These examples belong to the class of the so-called Grigorchuk-Gupta-Sidki groups (GGS-groups, for short), which are a natural generalisation of the Grigorchuk group \cite{G}, and the Gupta-Sidki examples from \cite{GS}.

\section{Preliminaries}

Let us consider a set $X$ with $d$ elements. The $d$-adic tree $T$ is a tree whose set of vertices is the free monoid $X^*$, where a word $u$ is a descendant of $v$ if $u=vx$ for some $x\in X$. The only word of length zero is the empty word $\emptyset$, which is the root of the tree $T$. If we consider the words of length at most $n$ we have a finite subtree $T_n$, and the words whose length is exactly $n$ form the $n$-th level of the tree, $L_n$. 

An automorphism of the $d$-adic tree is a map preserving incidence between vertices. All automorphisms of $T$ form a group $\Aut T$ under composition, where we write $fg$ for $g\circ f$. Thus $(fg)(u)=g(f(u))$ for every vertex $u$ of $T$.

Let us consider the natural projection $\pi_n:\Aut T\longrightarrow \Aut T_n$, which sends every automorphism to its restriction to $T_n$. Observe that the stabilizer $\st(L_n)$ of the $n$-th level is the kernel of $\pi_n$, so it is a normal subgroup in $\Aut T$, and we have $\Aut T_n \cong \Aut T/ \st(L_n)$.

An important observation is that every automorphism $g\in\Aut T$ can be fully described by saying for each vertex $u\in T$ how  $g$ permutes the $d$ vertices hanging from $u$. So, there is a permutation $\alpha$ of $X$ such that $g(ux)=g(u)\alpha(x)$. We say that $\alpha$ is the \textbf{label} of $g$ at the vertex $u$, and we denote it by $g_{(u)}$.

Since $T\cong T_u$, where $T_u$ denotes the subtree hanging from a vertex $u$, we have $\Aut T\cong \Aut T_u$. We speak about the \textbf{section} of $g$ at the vertex $u$ and we denote it by $g_u$, to refer to the automorphism defined by $g(uv)=g(u)g_u(v)$ for each vertex $v$. Then we have the following formulas:

\begin{align}\label{formulas}
\nonumber
(f^{-1})_{u}&=(f_{f^{-1}(u)})^{-1},\\ 
(fg)_{u}&=f_{u}g_{f(u)},\\
\nonumber
f_{uv}&=(f_u)_v,
\end{align}
and,
\begin{equation}
\label{conjsect}
(f^g)_{u}=(g_{g^{-1}(u)})^{-1}f_{g^{-1}(u)}g_{g^{-1}f(u)}.
\end{equation}

Also, we define the homomorphism $$\psi_n:\st(L_n)\longrightarrow\Aut T\times\overset{d^n}{\cdots}\times\Aut T$$ which sends $g\in\st(L_n)$ to the $d^n$-tuple of its sections $(g_{u_1},\cdots, g_{u_{d^n}})$, with $u_i\in L_n$. In the same way, for the stabilizer $\st(u)$ of the vertex $u$, we have a homomorphism denoted by $\psi_u$ which sends $g\in\st(u)$ to $g_u$.

Sometimes it is useful to think of $\Aut T$ as a semidirect product. 
\begin{proposition}\label{prop1}
Let $T$ be the $d$-adic tree and let us consider the following subgroup for each $n\in \N$: $$H_n=\{h\in \Aut T\mid h_{u}=1\,\, \forall u\in L_n\}.$$ Then we have  $$\Aut T=H_n\ltimes \st(L_n).$$  
\end{proposition}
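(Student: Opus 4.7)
The plan is to verify the three standard conditions for an internal semidirect product decomposition $G = H \ltimes N$: namely, that $\st(L_n)$ is normal in $\Aut T$, that $H_n \cap \st(L_n) = 1$, and that every element of $\Aut T$ factors as a product of something in $H_n$ with something in $\st(L_n)$.

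First I would make sure $H_n$ is actually a subgroup, which follows immediately from the formulas in \eqref{formulas}: if $f,g \in H_n$ and $u \in L_n$, then $f(u) \in L_n$ (automorphisms preserve levels), hence $(fg)_u = f_u g_{f(u)} = 1 \cdot 1 = 1$ and $(f^{-1})_u = (f_{f^{-1}(u)})^{-1} = 1$. Normality of $\st(L_n)$ in $\Aut T$ is already noted in the preliminaries, since $\st(L_n)$ is the kernel of the level-$n$ truncation map $\pi_n : \Aut T \to \Aut T_n$.

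Next, for the trivial intersection: any $h \in H_n \cap \st(L_n)$ fixes every vertex of $L_n$ pointwise and satisfies $h_u = 1$ for each $u \in L_n$. But an automorphism is completely determined by its action on $L_n$ together with its sections at $L_n$ (i.e., by the data read off along each root-to-leaf path in $T_n$ and then below), so these two conditions force $h = 1$.

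The core step is the factorisation. Given an arbitrary $g \in \Aut T$, I would construct $h \in H_n$ by specifying its labels at every vertex: set $h_{(u)} = g_{(u)}$ for all $u$ of length strictly less than $n$, and $h_{(u)} = 1$ for all $u$ of length at least $n$. Such a label assignment determines a unique automorphism, and by construction $h_u = 1$ for every $u \in L_n$, so $h \in H_n$; moreover $h$ and $g$ induce the same element of $\Aut T_n$, because they carry identical labels at every vertex of $T_{n-1}$. Therefore $\pi_n(h^{-1}g) = 1$, i.e.\ $s := h^{-1} g \in \st(L_n)$, and $g = hs$. Uniqueness of this factorisation follows from the trivial intersection established above: if $h_1 s_1 = h_2 s_2$, then $h_2^{-1} h_1 = s_2 s_1^{-1} \in H_n \cap \st(L_n) = 1$.

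The only mildly delicate point will be keeping the product convention $fg = g \circ f$ consistent throughout, especially when checking that defining $h$ by prescribing labels yields an automorphism agreeing with $g$ on $T_n$; everything else is a bookkeeping consequence of the section formulas in \eqref{formulas}.
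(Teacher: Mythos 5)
Your proof is correct and complete; note that the paper actually states Proposition \ref{prop1} without any proof, so your argument supplies exactly the standard verification one would expect (subgroup check via the section formulas, normality of $\st(L_n)$ as $\ker\pi_n$, trivial intersection, and the factorisation obtained by truncating the label data of $g$ at level $n$). The only point worth spelling out in a written version is the observation that an automorphism fixing $L_n$ pointwise automatically fixes all vertices of length less than $n$ as well, which is what makes both your trivial-intersection step and the identity $\pi_n(h)=\pi_n(g)$ immediate.
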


Observe that for $f\in \st(L_n)$ and $g=hg'\in \Aut T$, with $h\in H_n$ and $g'\in\st(L_n)$, we have
\begin{equation}\label{conj2}
(f^g)_u=(f_{h^{-1}(u)})^{g_u}=(f_{h^{-1}(u)})^{g'_u}\text{ for all $u\in L_n$.}
\end{equation}

Let now $G\leq \Aut T$. Then we can consider the stabilizers in $G$ of each vertex, $\st_G(u)=\st(u)\cap G$, and the level stabilizers $\st_G(L_n)=\cap_{u\in L_n}\st_G(u)=\st(L_n)\cap G$. So we have the restrictions of $\psi_n$ and $\psi_u$ to $\st_G(L_n)$ and $\st_G(u)$, respectively. Since we are interested in those groups for which the images under $\psi_u$ and $\psi_n$ are in $G$ and $G\times\overset{d^n}{\dots}\times G$, we give the following definition.
\begin{definition}
We say that a group $G\leq \Aut T$ is \emph{self-similar} if for each element of $G$ its sections are also elements of $G$, in other words, if \begin{equation}\label{self2}
\{g_u\mid g\in G,\,\,\, u\in T\}\subseteq G.
\end{equation}
\end{definition}
It is easy to prove by induction on the length of a vertex and using the first two formulas in (\ref{formulas}), that if (\ref{self2}) is satisfied by the vertices of the first level the group is self-similar (see Proposition 3.1 in \cite{Gri}).
\begin{lemma}\label{self}
A group $G=\langle S\rangle\leq \Aut T$ is self-similar if and only if $s_x\in G$ for each $s\in S$ and $x\in X$.
\end{lemma}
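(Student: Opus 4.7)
The forward direction is immediate from the definition of self-similarity. For the converse, my plan is to proceed in two stages.

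\textbf{Stage 1.} I would first establish the special case of vertices at the first level, namely that $g_x \in G$ for every $g \in G$ and every $x \in X$. Since $G = \langle S \rangle$, any $g \in G$ is a word $g = s_1^{\epsilon_1} \cdots s_k^{\epsilon_k}$ with $s_i \in S$ and $\epsilon_i = \pm 1$. I induct on $k$. For $k = 1$, the case $\epsilon_1 = +1$ is exactly the hypothesis, while for $\epsilon_1 = -1$ the identity $(s^{-1})_x = (s_{s^{-1}(x)})^{-1}$ from \eqref{formulas} realises $(s^{-1})_x$ as the inverse of an element of $G$. For the inductive step, factor $g = fh$ with $f$ a shorter word and $h \in S \cup S^{-1}$, and apply $(fh)_x = f_x h_{f(x)}$ from \eqref{formulas} to write $g_x$ as a product of elements already known to lie in $G$ by the inductive hypothesis and the $k=1$ case.

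\textbf{Stage 2.} I would then induct on the length $n = |u|$ of the vertex to show that $g_u \in G$ for every $g \in G$. The case $n = 0$ is trivial since $g_\emptyset = g$, and $n = 1$ is Stage 1. For $n \ge 2$, I would write $u = wx$ with $|w| = n-1$ and $x \in X$, and use the identity $g_{wx} = (g_w)_x$ (the third formula of \eqref{formulas}, which is essentially the definition of section iterated). The element $g_w$ lies in $G$ by the inductive hypothesis on shorter vertices, and then $(g_w)_x$ lies in $G$ by Stage 1 applied to $g_w$.

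The only subtle point is the ordering of the two inductions: Stage 1 must be proved uniformly over all of $G$ before one can launch the induction on vertex length, because the latter feeds arbitrary elements $g_w \in G$ (not just generators) into the role of \emph{ambient automorphism whose first-level section is taken}. Once this order is respected, no genuine obstacle remains, and the section identities of \eqref{formulas} do all the bookkeeping.
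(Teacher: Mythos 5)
Your proof is correct and follows essentially the same route the paper indicates: it reduces the generator condition to the statement that all first-level sections of all elements of $G$ lie in $G$ (via the inverse and product formulas of \eqref{formulas}), and then inducts on vertex length using $f_{uv}=(f_u)_v$. The paper only sketches this argument in the sentence preceding the lemma, and your write-up, including the remark about the order of the two inductions, fills in exactly the intended details.
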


Observe that even if in the case of the whole group of automorphisms $\Aut T$ the homomorphisms $\psi_n$ and $\psi_u$ are surjective, this might not be true in general. According to this we have the following definitions.

\begin{definition}
Let $G\leq \Aut T$ be a self-similar group.
\begin{enumerate}
\item[(i)] We say that $G$ is \emph{fractal} if $\psi_u(\st_G(u))=G$ for each vertex $u\in T$.
\item[(ii)] We say that $G$ is \emph{strongly fractal} if $\psi_x(\st_G(L_1))=G$ for each $x\in X$.
\item[(iii)] We say that $G$ is \emph{super strongly fractal} if $\psi_u(\st_G(L_n))=G$ for each $u\in L_n$ and each $n\in \N$.
\end{enumerate}
\end{definition}
Notice that the definition of being super strongly fractal does not imply that $\psi_n$ is surjective from $G$ to $G\times\overset{d^n}{\dots}\times G$, but only that $\psi_n(\st_G(L_n))$ is a subdirect product in $G\times\overset{d^n}{\dots}\times G$. The same remark applies to strongly fractal groups with $n=1$.

There is a special case in which the first two definitions are equivalent.

\begin{lemma}\label{root}
Let $G\leq \Aut T$ and consider   a $d$-cycle $\sigma\in S_X$. If for each $g\in G$ we have $g_{(\emptyset)}=\sigma^k$ for some $k\in\N$  and $G$ is fractal, then $G$ is strongly fractal.
\end{lemma}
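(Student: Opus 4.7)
The plan is to reduce strong fractality to fractality by showing that, under the hypothesis, the first vertex stabilizer $\st_G(x)$ actually coincides with the first level stabilizer $\st_G(L_1)$ for every $x\in X$. Once this is established, the conclusion is immediate: for any $x\in X$, fractality yields $\psi_x(\st_G(L_1))=\psi_x(\st_G(x))=G$, which is exactly the definition of strongly fractal.

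To prove the equality $\st_G(x)=\st_G(L_1)$, I would take an arbitrary $g\in\st_G(x)$ and analyze how $g$ permutes the first level. By hypothesis, the root label satisfies $g_{(\emptyset)}=\sigma^k$ for some $k\in\N$, so the action of $g$ on $L_1=X$ is given by $\sigma^k$. Since $g$ fixes $x$, we have $\sigma^k(x)=x$. This is the only non-trivial step, and it is almost immediate: because $\sigma$ is a $d$-cycle on the $d$-element set $X$, no non-trivial power of $\sigma$ has a fixed point, forcing $\sigma^k=1$. Therefore $g$ fixes every vertex of the first level, i.e.\ $g\in\st_G(L_1)$. The reverse inclusion $\st_G(L_1)\subseteq\st_G(x)$ is trivial.

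The only subtlety to be careful about is to record explicitly that the action of $g$ on the first level is governed by $g_{(\emptyset)}$ (from the description of automorphisms via their labels in Section 2), so that the cycle-structure assumption on the root label indeed controls which vertices of $L_1$ are fixed. Beyond this, there is no genuine obstacle; the lemma is really a one-line observation once one notices that a $d$-cycle acts freely on $X$.
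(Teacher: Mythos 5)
Your proof is correct and follows exactly the paper's argument: both show that $\sigma^k(x)=x$ forces $\sigma^k=1$ because a $d$-cycle acts freely on $X$, hence $\st_G(x)=\st_G(L_1)$, and fractality then gives the conclusion. No issues.
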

\begin{proof}
Let $g\in\st_G(x)$ for $x\in X$. Then $\sigma^k(x)=x$ which only happens if $k\equiv 0\pmod d$. This implies that $g\in \st_G(L_1)$, so $\st_G(x)=\st_G(L_1)$, and we have finished.
\end{proof}
Observe that for $d=2$ the label at the root must be $1$ or $(1\,2)$, so according to the previous lemma, in this case being fractal and being strongly fractal are equivalent.

This can be generalised, to obtain another important corollary that follows from the previous lemma in the case $d=p$ where $p$ is a prime. If we consider $T$ to be the $p$-adic tree, $\Aut T$ is a profinite group which has a standard Sylow pro-$p$ subgroup consisting of automorphisms which have powers of a fixed $p$-cycle as a label in every vertex. Then, the previous lemma shows that for every subgroup of the Sylow pro-$p$ subgroup  being fractal and strongly fractal are equivalent. For example, this happens for the GGS-groups (for the definition see Section 4). 

One of our goals is to give examples of subgroups of $\Aut T$ for $d\ge 3$ which are fractal but are not strongly fractal. We next give the definition of being level transitive, because the examples that we present are of this type and also because in this case it is easier to check if a group is fractal or not. 
\begin{definition}
Let $G\leq \Aut T$. We say that $G$ is \emph{level transitive}  or that acts spherically transitively on $T$, if it is transitive on each level.
\end{definition}

In a similar way to Lemma \ref{self}, in some cases to check whether a group is fractal it is enough to look at the vertices on the first level (for a reference, see Section 3 in \cite{Gri2}).

\begin{lemma}\label{transfractal2}
If $G\leq \Aut T$ is transitive on the first level and $\psi_x(\st_G(x))=G$ for some $x\in X$, then $G$ is fractal and level transitive.
\end{lemma}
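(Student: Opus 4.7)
The plan is to first upgrade the hypothesis $\psi_x(\st_G(x)) = G$ at one particular $x \in X$ to $\psi_y(\st_G(y)) = G$ for every $y \in X$, and then to propagate this to all vertices by induction on depth, establishing level-transitivity along the way. For the upgrade, given $y \in X$, first-level transitivity supplies $g \in G$ with $g(x) = y$, and for any target section $f \in G$ we can choose $h_0 \in \st_G(x)$ with $(h_0)_x = f$. The conjugate $h := h_0^g$ then lies in $G$ and fixes $y$ (since $h_0$ fixes $g^{-1}(y) = x$), so $h \in \st_G(y)$; applying formula (\ref{conjsect}) at $u = y$, together with the identity $(g^{-1})_y = (g_x)^{-1}$ from (\ref{formulas}) and the observation that in the paper's notation $(g^{-1}h_0)(y) = h_0(g^{-1}(y)) = x$, one obtains
\[
h_y = (g_x)^{-1} \cdot f \cdot g_x.
\]
As $f$ ranges over $G$, this ranges over the conjugate subgroup $(g_x)^{-1} G g_x = G$ (self-similarity places $g_x$ in $G$), so $\psi_y(\st_G(y)) = G$ for every $y \in X$.

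With this in hand, level transitivity is proved by induction on $n$. Given $u_1 v_1, u_2 v_2 \in L_{n+1}$ with $u_i \in X$ and $v_i \in L_n$, I would choose $g \in G$ with $g(u_1) = u_2$ using $L_1$-transitivity; this sends $u_1 v_1$ to $u_2 \tilde v$ with $\tilde v := g_{u_1}(v_1)$. The inductive hypothesis yields $k \in G$ with $k(\tilde v) = v_2$, and the upgraded surjectivity yields $h \in \st_G(u_2)$ with $h_{u_2} = k$; the product $gh$ then carries $u_1 v_1$ to $u_2 v_2$. Fractality then follows by induction on $|u|$: write $u = yv$ with $y \in X$, and use the formula $f_{yv} = (f_y)_v$ from (\ref{formulas}) to factor $\psi_u$ on $\st_G(u)$ as $\psi_v \circ \psi_y$. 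A brief verification, again lifting via the first-level surjectivity, shows that $\psi_y$ restricts to a surjection $\st_G(u) \twoheadrightarrow \st_G(v)$, whence $\psi_u(\st_G(u)) = \psi_v(\st_G(v)) = G$ by the inductive hypothesis.

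The main obstacle is the opening conjugation calculation: one must pick the correct conjugate so that it lands in $\st_G(y)$ and then expand its section at $y$ using the conjugation and section formulas, being attentive to the paper's convention $fg = g \circ f$ when interpreting the subscript $g^{-1}h_0(y)$ in (\ref{conjsect}). Once that identity is established, the two subsequent inductions reduce to straightforward bookkeeping.
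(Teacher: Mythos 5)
Your proof is correct. Note that the paper itself gives no argument for this lemma (it only points to Section 3 of \cite{Gri2}), so there is nothing to compare against; what you have written is the standard argument, and every step checks out. In particular, the delicate conjugation computation is right: with the paper's convention $(fg)(u)=g(f(u))$, the element $h=h_0^g$ does fix $y$, and formula (\ref{conjsect}) at $u=y$ gives $h_y=(g_x)^{-1}(h_0)_x\,g_{h_0(x)}=(g_x)^{-1}f\,g_x$, so $\psi_y(\st_G(y))\supseteq (g_x)^{-1}Gg_x=G$. The two inductions (level transitivity via $g(u_1v_1)=u_2\,g_{u_1}(v_1)$ followed by a lift of a corrector into $\st_G(u_2)$, and fractality via the factorization $\psi_{yv}=\psi_v\circ\psi_y$ together with the fact that $f\in\st_G(yv)$ forces $f(y)=y$ and $f_y\in\st_G(v)$) are both sound. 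One small point worth making explicit: you use self-similarity twice (to place $g_x$ in $G$ and to place $f_y$ in $\st_G(v)$ rather than merely $\st(v)$); the lemma's statement does not repeat this hypothesis, but it is implicit, since the paper only defines ``fractal'' for self-similar groups.
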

Since we will want to prove that a group is not strongly fractal, we are interested in identifying which the first level stabilizer is. We present a tool that we have developed in order to do this in the following lemma. Let us denote by $\rho$ the homomorphism from $G$ to $S_d$ that sends each $g\in G$ to the label of $g$ at the root, $g_{(\emptyset)}$.
\begin{lemma}\label{presentation}
Let $G\leq\Aut T$ and put $J=\rho(G)$. Suppose that we have a presentation $J=\langle Y\mid R\rangle$ and let $\theta:F\longrightarrow J$ be the epimorphism corresponding to this presentation, where $F$ is the free group generated by $Y$. If there exists  a surjective homomorphism $\phi:F\longrightarrow G$ making the following diagram commutative,
\[
\begin{diagram}
\node{F}\arrow{e,t}{\phi}\arrow{se,b}{\theta}\node{G}\arrow{s,r}{\rho}\\
\node[2]{J}
\end{diagram}
\]
then, $$\st_G(L_1)=\langle\phi(R)\rangle^G.$$
\end{lemma}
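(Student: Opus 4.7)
The plan is to recognize this as essentially a diagram chase, reducing to a standard fact about presentations: if $\theta\colon F\to J$ is a presentation then $\ker\theta=\langle R\rangle^F$, and the images of kernels behave well under surjective maps.

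First I would identify the two kernels. The homomorphism $\rho\colon G\to J$ sends $g$ to the label $g_{(\emptyset)}$, i.e.\ to the permutation $g$ induces on $L_1=X$; therefore $\ker\rho=\st_G(L_1)$. On the other side, by definition of the presentation $\langle Y\mid R\rangle$ we have $\ker\theta=\langle R\rangle^F$, the normal closure of $R$ in the free group $F$.

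Next I would establish the inclusion $\langle\phi(R)\rangle^G\subseteq\st_G(L_1)$. For each $r\in R$, commutativity of the diagram gives $\rho(\phi(r))=\theta(r)=1$, so $\phi(r)\in\ker\rho=\st_G(L_1)$. Since $\st_G(L_1)$ is normal in $G$, it contains the normal closure $\langle\phi(R)\rangle^G$.

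For the reverse inclusion, take $g\in\st_G(L_1)$. Surjectivity of $\phi$ yields $w\in F$ with $\phi(w)=g$, and then
\begin{equation*}
\theta(w)=\rho(\phi(w))=\rho(g)=1,
\end{equation*}
so $w\in\ker\theta=\langle R\rangle^F$. Applying $\phi$ and using that a surjective homomorphism carries the normal closure of a set to the normal closure of its image, we get
\begin{equation*}
g=\phi(w)\in\phi(\langle R\rangle^F)=\langle\phi(R)\rangle^{\phi(F)}=\langle\phi(R)\rangle^G,
\end{equation*}
which closes the argument. There is no real obstacle here; the only point worth stating carefully is the last equality, namely that $\phi$ being onto $G$ is what lets us replace conjugation by elements of $F$ with conjugation by elements of $G$ when pushing the normal closure through $\phi$.
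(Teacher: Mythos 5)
Your proof is correct and follows essentially the same route as the paper: identify $\ker\rho=\st_G(L_1)$ and $\ker\theta=\langle R\rangle^F$, use commutativity and surjectivity of $\phi$ to get $\st_G(L_1)=\phi(\ker\theta)$, and push the normal closure through the surjection. The paper compresses the two inclusions into a single chain of equalities, but the content is identical.
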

\begin{proof}
We know that $\ker\theta=\langle R\rangle ^F$. On the other hand, since $\phi$ is surjective, every $g\in G$ can be written as $g=\phi(x)$ for some $x\in F$, and then $g\in\ker\phi$ if and only if $x\in\ker(\rho\circ\phi)$.
Consequently,
\begin{align*}
 \st_G(L_1)&=\ker\rho=\phi(\ker(\rho\circ\phi))\\
 &=\phi(\ker\theta)=\phi(\langle R\rangle^F)\\
 &=\langle\phi(R)\rangle^G.
\end{align*}

\end{proof}
Notice that the actual condition we are asking about $\phi$ is to be surjective, because by the universal property of free groups we are always able to construct some $\phi$ making the diagram commutative. In other words, the point is whether for each $y\in Y$ we can choose an element $g_y\in\rho^{-1}(\theta(y))$, in such a way that $\{g_y\mid y\in Y\}$ generates the whole group $G$ or not.

Now, in the following lemma we present another new result, which will help us to prove that the image of a level stabilizer under $\psi_u$ is strictly contained in $G$.
\begin{lemma}\label{lema2}
Let $G\leq\Aut T$ be a self-similar group. If $K=\langle S\rangle ^G\subseteq \st_G(L_n)$ for some $n\in \N$ and $\psi_u(S)\subseteq N$ for each $u\in L_n$, where $N\trianglelefteq G$, then $\psi_u(K)\subseteq N$ for each $u\in L_n$.
\end{lemma}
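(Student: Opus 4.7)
The plan is to show that $\psi_u$ restricted to $K$ is a homomorphism into $N$, and since $K$ is generated (as a group, not merely a normal subgroup) by the conjugates $s^g$ with $s\in S$, $g\in G$, it suffices to verify that each such conjugate lies in $\psi_u^{-1}(N)$ for every $u\in L_n$. Because $K\subseteq \st_G(L_n)$, the map $\psi_u$ is well defined as a group homomorphism on $K$, so this reduction is legitimate.

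The main computation is to evaluate $(s^g)_u$ for $s\in S$, $g\in G$ and $u\in L_n$. I would invoke Proposition \ref{prop1} to decompose $g$ in $\Aut T$ as $g=hg'$ with $h\in H_n$ and $g'\in\st(L_n)$, and then apply formula (\ref{conj2}) to obtain
\[
(s^g)_u=(s_{h^{-1}(u)})^{g'_u}.
\]
Since $h$ preserves the level structure of $T$, the vertex $h^{-1}(u)$ still lies in $L_n$, and the hypothesis $\psi_{h^{-1}(u)}(S)\subseteq N$ gives $s_{h^{-1}(u)}\in N$.

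The one subtle point is to check that $g'_u$ belongs to $G$, so that the conjugation $(\cdot)^{g'_u}$ preserves $N$ by normality. This follows from the identity $g_{h^{-1}(u)}=h_{h^{-1}(u)}\,g'_{h(h^{-1}(u))}=g'_u$, valid because $h_{h^{-1}(u)}=1$ (as $h^{-1}(u)\in L_n$ and $h\in H_n$). Hence $g'_u=g_{h^{-1}(u)}$, which lies in $G$ by self-similarity. Consequently $(s^g)_u=(s_{h^{-1}(u)})^{g'_u}\in N$, and the same argument applied to $s^{-1}$ handles the inverse generators. Since $\psi_u$ is a homomorphism on $K$ and every generator of $K$ maps into $N$, we conclude $\psi_u(K)\subseteq N$ for every $u\in L_n$. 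I do not expect any genuine obstacle; the only care needed is in tracking the decomposition $g=hg'$ and verifying that $g'_u\in G$, which is where the self-similarity hypothesis is used.
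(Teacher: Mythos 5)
Your proof is correct and follows essentially the same route as the paper's: reduce to the generators $s^g$, decompose $g=hg'$ with $h\in H_n$ and $g'\in\st(L_n)$ via Proposition \ref{prop1}, apply formula (\ref{conj2}), and use self-similarity together with $h_{h^{-1}(u)}=1$ to see that $g'_u=g_{h^{-1}(u)}\in G$ so that normality of $N$ applies. No gaps.
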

\begin{proof}
Consider $k\in K$ and let us write $k=(s_1^{\epsilon_1})^{g_1}\dots(s_r^{\epsilon_r})^{g_r}$ where $\epsilon_i\in\{-1,1\}$, $s_i\in S$ and $g_i\in G$ for each $i=1,\dots,r$. Let $u\in L_n$. Since $K\leq \st_G(L_n)$ we know that $k\in \st_G(u)$ and we have $$\psi_u(k)=\psi_u(s_1^{g_1})^{\epsilon_1}\dots \psi_u(s_r^{g_r})^{\epsilon_r}.$$
Thus it is enough to see that $\psi_u(s^g)\in N$ for each $s\in S$, $g\in G$.
 Since $G\leq \Aut T$ and $\Aut T=H_n\ltimes\st(L_n)$ we write each $g=ht$ where $h\in H_n$ and $t\in \st(L_n)$. Now by (\ref{conj2}) we have $$\psi_u(s^g)=(s_{h^{-1}(u)})^{t_u}$$ 
for each $u\in L_n$, and since $\psi_{h^{-1}(u)}(S)\subseteq N$ and $N$ is normal in $G$, it is enough to check that $t_u$ belongs to $G$. We know that $G$ is self-similar, so $g_{v}\in G$ for each $v\in T$, in particular for $v=h^{-1}(u)$, but $g_{h^{-1}(u)}=h_{h^{-1}(u)}t_u=t_u$ because $h\in H_n$, so we are done.
\end{proof}


Now, let us introduce a stronger version of the previous lemma that will help us to check whether a strongly fractal group is super strongly fractal or only strongly fractal.

\begin{lemma}
Let $G$ be level transitive and super strongly fractal. If $K=\langle S\rangle^G\subseteq \st_G(L_n)$ for some $n\in \N$, then $\psi_u(K)=\langle\psi_v(S)\mid v\in L_n\rangle^G$ for any $u\in L_n$.
\end{lemma}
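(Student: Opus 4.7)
The plan is a two-inclusion argument. Set $N := \langle \psi_v(S) \mid v \in L_n \rangle^G$, which is a normal subgroup of $G$ by construction. The inclusion $\psi_u(K) \subseteq N$ is immediate from Lemma \ref{lema2}: each $\psi_v(S)$ lies in $N$, in particular $\psi_u(S) \subseteq N$ for every $u \in L_n$, so $\psi_u(K) \subseteq N$.

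For the reverse inclusion, I would first establish that $\psi_u(K)$ is itself normal in $G$. This is where super strong fractality is crucial. Since $K \trianglelefteq G$ and $\st_G(L_n) \leq G$, we have $K \trianglelefteq \st_G(L_n)$, so $\psi_u(K)$ is normal in $\psi_u(\st_G(L_n))$; by super strong fractality this image equals $G$, and we get $\psi_u(K) \trianglelefteq G$. Granting this, it suffices to show $\psi_v(s) \in \psi_u(K)$ for every $s \in S$ and every $v \in L_n$.

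To produce such an expression I would use level transitivity to pick $g \in G$ with $g(v) = u$, and decompose $g = ht$ with $h \in H_n$ and $t \in \st(L_n)$ according to Proposition \ref{prop1}. Since $t$ fixes $L_n$ pointwise, $h(v) = g(v) = u$, i.e. $h^{-1}(u) = v$. Because $K$ is normal in $G$, $s^g \in K$, and formula (\ref{conj2}) gives
\[
\psi_u(s^g) = (s_{h^{-1}(u)})^{t_u} = \psi_v(s)^{t_u}.
\]
A short section computation using $h_{h^{-1}(u)} = 1$ (since $h \in H_n$) shows $t_u = (ht)_{h^{-1}(u)} = g_{h^{-1}(u)} = g_v$, which lies in $G$ by self-similarity. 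Hence $\psi_v(s) = \psi_u(s^g)^{(g_v)^{-1}}$ is a $G$-conjugate of an element of $\psi_u(K)$, and by the normality of $\psi_u(K)$ in $G$ established above, $\psi_v(s) \in \psi_u(K)$.

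The main subtlety is the normality of $\psi_u(K)$ in the whole of $G$: the homomorphism $\psi_u$ is only defined on $\st_G(L_n)$, so conjugation by elements of $G \setminus \st_G(L_n)$ is not directly accessible, and it is precisely super strong fractality (rather than merely strong or plain fractality) that promotes the automatic normality of $\psi_u(K)$ inside $\psi_u(\st_G(L_n))$ to normality inside $G$. Without this step, the conjugate $\psi_v(s)^{(g_v)^{-1}}$ produced in the final step could fail to land back in $\psi_u(K)$, and the argument would break.
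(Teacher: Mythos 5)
Your proof is correct and uses the same ingredients as the paper's: Lemma \ref{lema2} for the inclusion $\psi_u(K)\subseteq N$, and level transitivity together with formula (\ref{conj2}) and super strong fractality for the reverse inclusion. The only difference is organizational rather than substantive — you invoke super strong fractality once to conclude $\psi_u(K)=\psi_u(K)\trianglelefteq\psi_u(\st_G(L_n))=G$ and then need only check that the generators $\psi_v(s)$ lie in $\psi_u(K)$, whereas the paper takes a general element of $N$ and lifts each conjugator $g'_i$ to some $h_i\in\st_G(L_n)$ individually; this is the same mechanism (lifting conjugators through the surjection $\psi_u$) packaged slightly more cleanly.
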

\begin{proof}
Let us denote $N=\langle\psi_v(S)\mid v\in L_n\rangle^G$.
Since $\psi_u(S)\subseteq N$ for every $u\in L_n$, which is a normal subgroup, the inclusion $\psi_u(K)\subseteq N$ follows from the previous lemma.

Now, let $g=(\psi_{u_1}(s_1)^{\epsilon_1})^{g_1}\dots(\psi_{u_r}(s_r)^{\epsilon_r})^{g_r}\in N$.  Since $G$ is level transitive for every $u_i\in\{u_1,\dots,u_r\}$, there is some $f_i\in G$ such that $f_i(u_i)=u$. Then, by (\ref{conj2}) $$\psi_u(s_i^{f_i})^{((f_i)_{u_i})^{-1}}=\psi_{u_i}(s_i).$$ Then we can write $g=(\psi_{u}(s_1^{f_1})^{\epsilon_1})^{g'_1}\dots(\psi_{u}(s_r^{f_r})^{\epsilon_r})^{g'_r}$, where $g'_i=((f_i)_{u_i})^{-1}g_i\in G$. From the fact that $G$ is super strongly fractal, we know that there are some $h_i\in\st_G(L_n)$ such that $\psi_u(h_i)=g'_i$ for $i=1,\dots,r$. We conclude because
\begin{align*}
g&=(\psi_u(s_1^{f_1})^{\epsilon_1})^{g'_1}\dots(\psi_u(s_r^{f_r})^{\epsilon_r})^{g'_r}\\
&=(\psi_u(s_1^{f_1})^{\epsilon_1})^{\psi_u(h_1)}\dots(\psi_u(s_r^{f_r})^{\epsilon_r})^{\psi_u(h_r)}\\
&=\psi_u((s_1^{\epsilon_1})^{f_1h_1}\dots(s_r^{\epsilon_r})^{f_rh_r})\in\psi_u(K).
\end{align*}
\end{proof}
In particular, we have the following result when the group is strongly fractal.
\begin{corollary}\label{strongly}
Let $G$ be a strongly fractal group which acts transitively on the first level. If $K=\langle S\rangle^G$ and $K\subseteq \st_G(L_1)$ then $\psi_x(K)=\langle\psi_y(S)\mid y\in X\rangle^G$ for any $x\in X$.
\end{corollary}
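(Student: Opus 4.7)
The corollary is the $n=1$ specialization of the preceding lemma, so the plan is simply to verify that the two hypotheses we have (strongly fractal, transitive on the first level) are exactly what is needed when $n=1$.

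First I would observe that at level $n=1$ the notion of super strongly fractal coincides with strongly fractal, since super strong fractality requires $\psi_u(\st_G(L_n))=G$ for \emph{every} $n$, and the preceding lemma's proof only uses this property at the single level $n$ under consideration to produce elements $h_i\in\st_G(L_n)$ with $\psi_u(h_i)=g'_i$. Similarly, level transitivity is invoked in that proof only to find, for vertices $u_i,u$ in the fixed level $L_n$, some $f_i\in G$ with $f_i(u_i)=u$; when $n=1$, this is exactly transitivity on the first level $X=L_1$.

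Having checked that no hidden use of higher-level assumptions occurs, I would conclude by setting $n=1$, $u=x$, and letting $v$ range over $X=L_1$ in the statement of the preceding lemma. This gives directly
\[
\psi_x(K)=\langle\psi_y(S)\mid y\in X\rangle^G,
\]
as required. The only potential pitfall is making sure that the argument of the preceding lemma genuinely localizes to a single level; since the formula (\ref{conj2}) and the preimage step both refer only to the chosen level, this localization is automatic, and no further calculation is required.
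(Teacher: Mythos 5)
Your proposal is correct and matches the paper's intent exactly: the paper states this as an immediate specialization of the preceding lemma to $n=1$, and your verification that the lemma's proof uses level transitivity and super strong fractality only at the single level $n$ (so that first-level transitivity and strong fractality suffice when $n=1$) is precisely the justification required. No gaps.
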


Finally let us introduce another lemma that will help us to prove that a group is super strongly fractal. This lemma tells us that in some cases, it suffices to check whether in each level stabilizer there are elements whose sections at vertices on this level generate the whole group. 

\begin{lemma}\label{stabrooted}
Let $G\leq\Aut T$ be a self-similar group such that there is a rooted automorphism $a\in G$, with $a_{(\emptyset)}$ a $d$-cycle. If for each $n\in \N$ we have $\langle\psi_{u_n}(\st_G(L_n))\mid u_n\in L_n\rangle=G$, then $G$ is super strongly fractal.
\end{lemma}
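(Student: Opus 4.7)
The plan is induction on $n$, exploiting that $a$ is rooted. Because $a\in H_n$ for every $n \geq 1$, formula (\ref{conj2}) collapses to $(g^a)_u = g_{a^{-1}(u)}$ whenever $g \in \st_G(L_n)$ and $u \in L_n$. Hence $\psi_u(\st_G(L_n))$ is constant on $\langle a\rangle$-orbits of $L_n$, and since $a_{(\emptyset)}$ is a $d$-cycle these orbits are parametrised by the length-$(n-1)$ suffix of $u$. Writing $u = xv$ with $x \in X$ and $v \in L_{n-1}$, I set $M_v := \psi_{xv}(\st_G(L_n))$; this is well defined. In the base case $n = 1$ there is a single $M_\emptyset$, so the hypothesis is exactly $M_\emptyset = G$, giving strong fractality. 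A short auxiliary induction (correct the first letter with a power of $a$, then the remaining suffix via strong fractality) shows that $G$ is then level transitive.

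For the inductive step, assume the lemma through level $n-1$; in particular $G$ is fractal at every vertex. The key computation is to conjugate an arbitrary $f \in \st_G(L_n)$ by $h \in \st_G(x)$ and apply formula (\ref{conjsect}) at $u' = xv'$, obtaining
\begin{equation*}
(f^h)_{xv'} = \bigl((f_x)_{h_x^{-1}(v')}\bigr)^{(h_x)_{h_x^{-1}(v')}}.
\end{equation*}
Letting $g := h_x$ vary freely over $G$ (possible by fractality at $x$) and setting $w := g^{-1}(v')$, this translates, after letting $f$ run through $\st_G(L_n)$, into the key identity
\begin{equation*}
M_{g(w)} = M_w^{g_w} \qquad \text{for every } g \in G,\ w \in L_{n-1}.
\end{equation*}

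Two specialisations of this identity close the proof. First, choose $g \in \st_G(w)$, so that $g(w) = w$; by the inductive hypothesis $\psi_w(\st_G(L_{n-1})) = G$, so $g_w$ may be any prescribed element of $G$, and hence $M_w$ is normal in $G$. Second, given any $v, v' \in L_{n-1}$, level transitivity supplies $g \in G$ with $g(v) = v'$, whence normality reduces $M_{v'} = M_v^{g_v}$ to $M_{v'} = M_v$. All the $M_v$ therefore coincide with a single normal subgroup $M \trianglelefteq G$, and the hypothesis $\langle \psi_u(\st_G(L_n)) \mid u \in L_n\rangle = G$ forces $M = G$. The main technical obstacle is the careful derivation of $M_{g(w)} = M_w^{g_w}$ from (\ref{formulas}) and (\ref{conjsect}); once it is in hand, normality and level transitivity deliver the conclusion without further work.
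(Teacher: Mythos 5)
Your proof is correct, and while it shares the paper's overall scaffolding --- induction on $n$, with the rooted element $a$ and the fractality already established at lower levels used to move sections around $L_n$ --- the inductive step is organised around a genuinely different mechanism. The paper transports \emph{elements} exactly: for each $g_i\in\st_G(L_n)$ it constructs a conjugator $t_i=a^{j_1}f_1\cdots f_{n-1}$ carrying $w_i$ to $v$ whose section along the relevant path is trivial, so that $(g_i^{t_i})_v=(g_i)_{w_i}$ on the nose. You instead transport the \emph{subgroups} $M_v=\psi_{xv}(\st_G(L_n))$ up to conjugacy via the identity $M_{g(w)}=M_w^{g_w}$, use the inductive hypothesis $\psi_w(\st_G(L_{n-1}))=G$ to conclude that each $M_w$ is normal in $G$, and then let level transitivity and normality force all the $M_v$ to coincide with a single normal subgroup, which the generation hypothesis identifies with $G$. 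Your route buys a cleaner argument at the one delicate point of the paper's proof: you never need a conjugator with prescribed trivial sections (the paper asserts its element $f$ lies in $H_n$, though only its section at the one relevant vertex is actually controlled), because normality absorbs the conjugating factor $(h_x)_w$ automatically. The small extra care your version requires --- that $\st_G(L_n)\trianglelefteq G$ so that $f\mapsto f^h$ is a bijection of $\st_G(L_n)$ and the identity is an equality rather than an inclusion, and that fractality at $x\in X$ (from the base case) supplies $h\in\st_G(x)$ with $h_x=g$ for every $g\in G$ --- is all in place. In short: the paper's argument is the more constructive, yours the more structural; both are valid.
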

\begin{proof}
The proof works by induction on the length of the vertices. Let $x\in X$ and $g\in G$. We know that there are some $y_1,\dots,y_r\in  X$ such that $g=\psi_{y_1}(g_1)^{\epsilon_1}\dots\psi_{y_r}(g_r)^{\epsilon_r}$, where $g_i\in\st_G(L_1)$ and $\epsilon_i\in\{1,-1\}$. Then for each $i=1,\dots,r$ we have $a^{j_i}(y_i)=x$ for some $j_i\in\{0,\dots,d-1\}$. Then considering $g_i^{a^{j_i}}$ we get an element on the first level stabilizer such that $(g_i^{a^{j_i}})_x=(g_i)_{y_i}$. Then the element $h=(g_1^{a^{j_1}})^{\epsilon_1}\dots (g_r^{a^{j_r}})^{\epsilon_r}\in\st_G(L_1)$ satisfies $h_x=g$, so $\psi_x(\st_G(L_1))=G$.

Now let us suppose that we know the result for length $n-1$ and let us see it for $n$.
Let $v=x_1\dots x_{n}$ and $g\in G$. By assumption we know that $g=\psi_{w_1}(g_1)^{\epsilon_1}\dots\psi_{w_r}(g_r)^{\epsilon_r}$ where $w_i\in L_n$, $g_i\in\st_G(L_{n})$ and $\epsilon_i\in\{1,-1\}$ for each $i=1,\dots,r$. It suffices to show that for $i=1,\dots, r$ there is some $h_i\in\st_G(L_{n})$ such that $(h_i)_v=(g_i)_{w_i}$, because then $h=h_1^{\epsilon_1}\dots h_r^{\epsilon_r}\in\st_G(L_{n})$ and $h_v=g$, as desired.

Let $w$ be an arbitrary vertex in $L_n$. Then $w=y_1\dots y_{n}$ with $y_i\in X$. For each $k=1,\dots,n$ there is some $j_k=0,\dots,d-1$ such that $a^{j_k}(y_k)=x_k$. By inductive assumption $a\in\psi_{u}(\st_G(L_k))$ for every $u\in L_k$, with $k=1,\dots,n-1$. Thus, for each $k=1,\dots,n-1$ there is some $f_k\in\st_G(L_{k})$ such that $(f_k)_{y_1\dots y_k}=a^{j_{k+1}}$. Then if we consider the element $f=a^{j_1}f_1\dots f_{n-1}$, which belongs to $H_n$, we obtain that $f(w)=v$. Thus, in particular for each $i=1,\dots,r$ there is some $t_i\in H_n$ such that $t_i(w_i)=v$. Then $h_i=g_i^{t_i}\in\st_G(L_n)$ and by (\ref{conj2}) $$(h_i)_{v}=(g_i)_{t_i^{-1}(v)}=(g_i)_{w_i}.$$
\end{proof}

\begin{remark}\label{onevertex}
In particular, in the conditions of the previous lemma, it is enough for a group $G$ to be super strongly fractal having  one vertex $u_n\in L_n$ such that $\psi_{u_n}(\st_G(L_n))=G$ for each $n\in \N$.
\end{remark}
\section{Fractal groups which are not strongly fractal}

In this section we present an example for each $d\geq 3$ which is fractal but not strongly fractal. Even more, that example is a group acting spherically transitively on $T$. We denote by $x_i$ for $i=1,\dots, d$ the elements of $X$, or what it is the same, the vertices of the first level.

The example that we consider is a subgroup of the Hanoi Towers group, which is defined as follows for each $d\geq 3$.

For $1\leq i<j\leq d$, we define the element $a_{ij}$ which has the permutation $(x_i\, x_j)$ at the root and for each vertex on the first level:
\begin{equation*}
(a_{ij})_{x_k}=\begin{cases}
1 & \text{ if $k=i,j$}\\
a_{ij} & \text{ else.}
\end{cases}
\end{equation*}
The Hanoi Towers group is $H=\langle a_{ij}\mid 1\leq i<j\leq d\rangle$. Although $H$ is strongly fractal (see \cite[page 13]{Gri}), we are going to show that it has a subgroup which is fractal but not strongly fractal.

We consider the subgroup $G=\langle a_{i,i+1}\mid i=1,\dots,d-1\rangle\leq H$. To simplify the notation, we write $b_i=a_{i,i+1}$.

As a consequence of Lemma \ref{self} it is clear that $G$ is self-similar, because $(b_j)_{x_i}\in G$ for each $j=1,\dots,d-1$ and $i=1,\dots, d$.
 
Let us see that $G$ is fractal. Observe that since the element $b_{d-1}b_{d-2}\dots b_1$ has the label $(x_1x_2...x_d)$ at the root, $G$ is transitive on the first level, so by Lemma \ref{transfractal2} it is enough to show that $\psi_{x_1}(\st_G(x_1))=G$.

It suffices to check that each $b_{i}\in\psi_{x_1}(\st_G(x_1))$. Since $b_i\in\st_G(x_1)$ for $i\neq 1$ and in this case $\psi_{x_1}(b_i)=b_i$, it only remains to check that $b_1\in\psi_{x_1}(\st_G(x_1))$. 
To show this, consider the element $b_1^{b_2b_1}$. First of all observe that $(b_1^{b_2b_1})_{(\emptyset)}=(x_1 x_2)^{(x_1 x_2 x_3)}=(x_2 x_3)$, so $b_1^{b_2b_1}$ belongs to $\st_G(x_1)$. On the other hand, using (\ref{conjsect}) we have
\begin{eqnarray*}
(b_1^{b_2b_1})_{x_1}&=&((b_2b_1)_{(b_2b_1)^{-1}(x_1)})^{-1}(b_1)_{(b_2b_1)^{-1}(x_1)}(b_2b_1)_{(b_2b_1)^{-1}b_1(x_1)}\\
&=&((b_2b_1)_{x_3})^{-1}(b_1)_{x_3}(b_2b_1)_{x_3}\\
&=&((b_2)_{x_3}(b_1)_{x_2})^{-1}b_1(b_2)_{x_3}(b_1)_{x_2}\\
&=&b_1.
\end{eqnarray*}
We obtain that $\psi_{x_1}(b_{1}^{b_{2}b_{1}})=b_{1}$. Thus, we conclude that $\psi_{x_1}(\st_G(x_1))=G$ as desired.

Let us now calculate $\st_G(L_1)$. We have $\rho(G)=\langle\rho(b_i)\mid i=1,\dots,d-1\rangle=S_d$.  We know that a presentation of the group $S_d$ can be obtained by considering as generators $\{\tau_i=(i\,i+1)\}_{i=1,\dots,d-1}$ and the following relations:
\begin{eqnarray*}
&\tau_i^2=1,& i=1,\dots, d-1,\\
&\tau_i\tau_j=\tau_j\tau_i, &|i-j|>1,\\
&(\tau_i\tau_{i+1})^3=1,&i=1,\dots, d-2.
\end{eqnarray*} 
The proof of this fact can be found in \cite[page 296]{Suu}.

In order to apply Lemma \ref{presentation}, let $F$ be the free group generated by $\{\tau_1,\dots,\tau_{d-1}\}$ and $\theta:F\longrightarrow S_d$ the epimorphism corresponding to the presentation above. Thus $\ker\theta=\langle \tau_i^2,[\tau_i,\tau_j],(\tau_i\tau_{i+1})^3\mid i,j=1,\dots,d-1, |i-j|>1\rangle^F$. For each $i=1,\dots, d-1$ we have $b_i\in\rho^{-1}(\theta(\tau_i))$ and the $b_i$ generate the whole group $G$. We can define $\phi:F\longrightarrow G$ by sending $\tau_i$ to $b_i$ for each $i=1,\dots,d-1$. Then $\phi$ is a surjective homomorphism that makes the diagram commutative. Now, applying the lemma,
 if $$S=\{\{b_i^2\}_{i=1,\dots,d-1},\{(b_ib_{i+1})^3\}_{i=1,\dots,d-2},\{\left[b_i,b_j\right]\}_{|i-j|>1}\},$$ 
then we obtain that
$$\st_G(L_1)=\langle S\rangle^G.$$

Let us see to conclude that $\psi_{x_k}(\st_G(L_1))\neq G$ for some $k=1,\dots, d$. In fact we will see that this happens for any $k\in\{1,\dots,d\}.$
 
One can check that
\begin{equation*}
(b_{i}^2)_{x_k}=\begin{cases}
b_i^2 & \text{ if $k\neq i,i+1$},\\
1 & \text{ if $k=i, i+1$},\\
\end{cases}
\end{equation*}

\begin{equation*}
((b_{i}b_{i+1})^3)_{x_k}=\begin{cases}
(b_{i}b_{i+1})^3 & \text{ if $k\neq i,i+1,i+2$},\\
b_{i}b_{i+1}& \text{ if $k=i$},\\
b_{i+1}b_{i}& \text{ if $k=i+1$},\\
b_{i}b_{i+1}& \text{ if $k=i+2$},\\
\end{cases}
\end{equation*}

and, for $|i-j|>1$,
\begin{equation*}
(\left[b_i,b_j\right])_{x_k}=
\begin{cases}
\left[b_i,b_j\right] & \text{ if $k\neq i,i+1,j,j+1$},\\
1& \text{ else}.
\end{cases}
\end{equation*}
To see the importance of the condition $|i-j|>1$ in the last case, let us calculate for example $\left[b_i,b_j\right]_{x_i}$.
\begin{eqnarray*}
\left[b_i,b_j\right]_{x_i}&=&(b_i^{-1}b_i^{b_j})_{x_i}\\
&=&(b_i^{-1})_{x_i}(b_i^{b_j})_{x_{i+1}}\\
&=&((b_j)_{b_j^{-1}(x_{i+1})})^{-1}(b_i)_{b_j^{-1}(x_{i+1})}(b_j)_{b_j^{-1}b_i(x_{i+1})}\\
&=&((b_j)_{x_{i+1}})^{-1}(b_i)_{x_{i+1}}(b_j)_{x_i}\\
&=&b_j^{-1}b_j=1.
\end{eqnarray*}
Observe that here it is important that $b_j$ does not move $x_i$ and $x_{i+1}$, which happens because $|i-j|>1$.

If $\sigma:S_d\longrightarrow\{1,-1\}$ is the homomorphism sending each permutation to its signature, observe that for any $s\in S$ and $k=1,\dots,d$ we have $\sigma(\psi_{x_k}(s)_{(\emptyset)})=1$ because $\psi_{x_k}(s)$ is always a product of an even number of $b_i$. Then, if we consider $N=\langle\psi_{x_k}(S)\mid k=1,\dots,d\rangle ^G$ we still have that $\sigma(n_{(\emptyset)})=1$ for any $n\in N$. 

Now, we have $\st_G(L_1)=\langle S\rangle ^G$ and $\psi_{x_k}(S)\subseteq N$ where $N$ is normal in $G$, so by Lemma \ref{lema2} we conclude that $\psi_{x_k}(\st_G(L_1))\subseteq N$. But $N$ can not be the whole group $G$ because each $n\in N$ has an even permutation at the root and consequently $b_i\notin N$ for each $i=1,\dots, d-1$. In other words, $\rho(N)\subseteq A_d$ while $\rho(G)=S_d$, so $N\neq G$.

\section{Strongly fractal groups which are not super strongly fractal}

In order to see an example of a group which is strongly fractal but not super strongly fractal, we have to introduce the GGS-groups. These groups are subgroups of $\Aut T$ where $T$ is the $d$-adic tree for $d\geq 2$.

\begin{definition}
Let us consider the rooted automorphism corresponding to $(1\dots d)$ and denote it by $a$. Given a non-zero vector $e=(e_1,...,e_{d-1})\in (\Z/d\Z)^{d-1}$, we define an automorphism $b\in \st(L_1)$ by means of $\psi(b)=(a^{e_1},\dots,a^{e_{d-1}},b)$. Then, a \emph{GGS-group} is the group $G$ generated by these two automorphisms $a$ and $b$.
\end{definition}

From now on we consider $d=p$ where $p$ is a prime. First of all let us see that every GGS-group is strongly fractal.

For these groups $\st_G(L_1)=\langle b\rangle ^G=\langle b,b^a,\dots b^{a^{p-1}}\rangle$. To simplify notation, we write $b_i=b^{a^{i}}$.

\begin{lemma}
Let $G$ be a GGS-group. Then $G$ is strongly fractal.
\end{lemma}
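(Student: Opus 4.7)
The plan is a direct combination of two earlier reduction lemmas. In a GGS-group every root label is a power of the $p$-cycle $\sigma=a_{(\emptyset)}$, since the generator $a$ has root label $\sigma$ and $b\in\st_G(L_1)$ has trivial root label. Thus the hypothesis of Lemma \ref{root} is satisfied, and we only need to prove that $G$ is fractal. Moreover $\langle a\rangle$ already acts transitively on $L_1$, so by Lemma \ref{transfractal2} it suffices to produce a single vertex $x\in X$ with $\psi_x(\st_G(x))=G$.

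I would take for $x$ the vertex at which the section of $b$ equals $b$ itself; call it $x_p$ in the notation of the definition. Clearly $b\in\st_G(x_p)$ and $\psi_{x_p}(b)=b$, so $b$ already lies in the image. To show that $a$ also lies there, use the hypothesis $e\neq 0$: pick $j\in\{1,\dots,p-1\}$ with $e_j\neq 0$ and set $k=p-j$, so $k\in\{1,\dots,p-1\}$. The conjugate $b^{a^k}$ belongs to $\st_G(L_1)\subseteq\st_G(x_p)$, and because $a^k\in H_1$ formula (\ref{conj2}) collapses to
\[
(b^{a^k})_{x_p} \;=\; b_{a^{-k}(x_p)} \;=\; b_{x_j} \;=\; a^{e_j}.
\]
Since $p$ is prime and $e_j\not\equiv 0\pmod p$, the element $a^{e_j}$ generates $\langle a\rangle$, so $a\in\psi_{x_p}(\st_G(x_p))$. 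Hence $\psi_{x_p}(\st_G(x_p))$ contains both generators of $G$ and therefore equals $G$.

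There is really no serious obstacle; the argument is just an application of Lemmas \ref{root} and \ref{transfractal2} together with the explicit formula for sections of conjugates of $b$ by powers of the rooted automorphism $a$. The only point requiring attention is that the chosen exponent $k$ must be nonzero modulo $p$, which is possible precisely because the defining vector $e$ is nonzero by hypothesis.
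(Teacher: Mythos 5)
Your proof is correct and follows essentially the same route as the paper: reduce to fractality via Lemma \ref{root} (all root labels are powers of the $p$-cycle), reduce to a single first-level vertex via Lemma \ref{transfractal2} and the transitivity of $\langle a\rangle$, and then observe that $b$ together with a conjugate $b^{a^k}$ having section $a^{e_j}$ with $e_j\not\equiv 0 \pmod p$ already yields both generators in the image. The only cosmetic difference is your choice of the vertex $x_p$ instead of $x_1$.
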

\begin{proof}
Let us see that $G$ is fractal. Since $G$ is in the  Sylow pro-$p$ subgroup of $\Aut T$ corresponging to the cycle $(1\dots p)$, this is enough to show that $G$ is strongly fractal because of the discussion after Lemma \ref{root}.
Since $\langle a\rangle$ acts transitively on the first level, according to Lemma \ref{transfractal2} it suffices to show that $\psi_x(\st_G(x))=G$ for some $x$ in the first level. Observe that conjugating $b$ by powers of $a$ permutes the sections of $b$ at the first level. In other words,
$$\psi(b_i)=(a^{e_{p-i+1}},\dots,a^{e_{p-1}},b,a^{e_1},\dots,a^{e_{p-i}}).$$
Then, since $e$ is non-zero, there is some $e_{p-i+1}\neq 0$ and since $b_1,b_i\in\st_G(x_1)$ we obtain that $\psi_{x_1}(\st_G(x_1))\geq\langle b,a^{e_{p-i+1}}\rangle =G$. We conclude that $G$ is strongly fractal.
\end{proof}

Let us consider a GGS-group with constant defining vector. By replacing $b$ with a suitable power of $b$, we may assume that $e=(1,\dots,1)$.

\begin{proposition}
Let $G$ be a GGS-group with constant defining vector. Then $G$ is strongly fractal but not super strongly fractal.
\end{proposition}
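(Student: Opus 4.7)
The preceding lemma already gives strong fractality, so the substance is showing that $G$ is not super strongly fractal. The plan is to produce a proper normal subgroup $N\trianglelefteq G$ with $\psi_u(\st_G(L_2))\subseteq N$ for every $u\in L_2$. Using the (known) fact that $G/G'\cong(\Z/p\Z)^2$ for constant-vector GGS-groups, let $\sigma_a,\sigma_b\colon G\to\Z/p\Z$ be the two coordinate projections and set $N=\ker(\sigma_a+\sigma_b)$; since $(\sigma_a+\sigma_b)(a)=1$, this is a proper normal subgroup of index $p$.

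The first key step is the inclusion $\st_G(L_2)\subseteq G'$. The two maps $\st_G(L_1)\to\Z/p\Z$ given by $\sigma_b$ and $g\mapsto-\sum_{x\in X}\sigma_a(g_x)$ are both $G$-invariant homomorphisms (for the second, one uses (\ref{conj2}) to see that conjugating $g$ only permutes the sections $g_x$ and applies inner automorphisms, which $\sigma_a$ cannot see), and they agree on $b$: $\sigma_b(b)=1$, while the sum, read off from $\psi(b)=(a,\dots,a,b)$, equals $-(p-1)=1$. Since $\st_G(L_1)=\langle b\rangle^G$ the two maps agree on all of $\st_G(L_1)$, so
\[
\sum_{x\in X}\sigma_a(g_x)=-\sigma_b(g).
\]
For $g\in\st_G(L_2)$ every $g_x\in\st_G(L_1)\subseteq\ker\sigma_a$, so the left side vanishes and $\sigma_b(g)=0$; combined with $\sigma_a(g)=0$ this forces $g\in G'$.

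The second key step is $\psi_x(G')\subseteq N$ for every $x\in X$, via Lemma \ref{lema2}. Since $G=\langle a,b\rangle$ one has $G'=\langle[a,b]\rangle^G$, and $[a,b]\in\st_G(L_1)$. A short computation using the formulas in (\ref{formulas}) gives $([a,b])_{x_k}=(b_{x_{k-1}})^{-1}b_{x_k}$, which for the constant vector collapses to
\[
\psi([a,b])=(b^{-1}a,\,1,\dots,1,\,a^{-1}b);
\]
both $b^{-1}a$ and $a^{-1}b$ lie in $N$, so Lemma \ref{lema2} applied with $K=G'$, $S=\{[a,b]\}$ and this $N$ yields $\psi_x(G')\subseteq N$.

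Putting the pieces together, take $u=(x,y)\in L_2$ and $g\in\st_G(L_2)\subseteq G'$. By the second step $g_x\in N$; since also $g_x\in\st_G(L_1)$, one has $g_x\in N\cap\st_G(L_1)=\ker\sigma_a\cap\ker\sigma_b=G'$, and applying the second step once more to $g_x\in G'$ gives $\psi_u(g)=\psi_y(g_x)\in N$. Hence $\psi_u(\st_G(L_2))\subseteq N\subsetneq G$, so $G$ is not super strongly fractal. The genuine technical point is the sum formula of the first step: it is exactly constancy of the defining vector that makes the contribution of each generator to $\sum_x\sigma_a((\,\cdot\,)_x)$ equal $-1$, which in turn is what makes $N=\ker(\sigma_a+\sigma_b)$ (rather than some more intricate subgroup) the right kernel to track uniformly through both layers of sections.
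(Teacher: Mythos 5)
Your proof is correct, and it takes a genuinely different route from the paper's. The paper identifies $\st_G(L_2)$ exactly: it imports two index computations from the literature ($|G:\st_G(L_2)|=p^{p+1}=|G:\st_G(L_1)'|$) to conclude $\st_G(L_2)=\st_G(L_1)'$, then uses Corollary \ref{strongly} twice to compute the image at the single vertex $x_1x_1$ precisely as $\langle b^{-1}a\rangle^G$, whose image in $G/G'\cong C_p\times C_p$ is cyclic. You instead settle for upper bounds: your ``weight-counting'' identity $\sum_{x\in X}\sigma_a(g_x)=-\sigma_b(g)$ on $\st_G(L_1)$ (obtained by comparing two conjugation-invariant homomorphisms on the generating set of conjugates of $b$, where constancy of the vector enters as $\sum e_i\equiv-1$) gives $\st_G(L_2)\subseteq G'$ directly, and then Lemma \ref{lema2} applied to $G'=\langle[a,b]\rangle^G$ with $N=\ker(\sigma_a+\sigma_b)$ pushes everything into an index-$p$ subgroup after two layers of sections. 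What this buys: your argument is more self-contained (it avoids the two cited index computations, needing only $G/G'\cong C_p\times C_p$ and $\st_G(L_1)=\langle b\rangle^G$), it does not need the lower-bound tool Corollary \ref{strongly} at all, and it shows $\psi_u(\st_G(L_2))\neq G$ uniformly for \emph{every} $u\in L_2$ rather than at one vertex; what it gives up is the exact determination of the image, which the paper's computation provides. All the individual steps check out: $g_x\in\st_G(L_1)=\ker\sigma_a$ for $g\in\st_G(L_2)$, the invariance of $\sum_x\sigma_a(g_x)$ under conjugation via (\ref{conj2}) and self-similarity, $N\cap\st_G(L_1)=G'$, and the section computation $\psi([a,b])=(b^{-1}a,1,\dots,1,a^{-1}b)$ agreeing with the paper's.
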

\begin{proof}
By the previous lemma it is enough to show that $G$ is not super strongly fractal.
In \cite[Theorem 2.4]{Alc} it is shown that $|G:\st_G(L_2)|=p^{t+1}$ where $t$ is the rank of the circulant matrix which has as first row the vector $(1,\dots,1,0)$. In this case the rank is $p$. It is also proved in \cite[Theorem 2.14]{Alc} that $|G:\st_G(L_1)'|=p^{p+1}$. The mentioned article is written for $p$ an odd prime, but this two results are also true for $p=2$. Since $\st_G(L_1)/\st_G(L_2)$ is abelian we know that $\st_G(L_1)'\subseteq\st_G(L_2)$, so we conclude that $\st_G(L_2)=\st_G(L_1)'$. Now $\st_G(L_1)'=\langle [b_i,b_j]\mid i,j=1,\dots, p\rangle^G$. Observe that $\psi([b_i,b_j])=(1,\dots,1,\underset{j}{[a,b]},1,\dots 1,\underset{i}{[b,a]},1\dots,1)$. By Corollary \ref{strongly}
we conclude that
$$
\psi_{x_1}(\st_G(L_2))=\psi_{x_1}(\st_G(L_1)')=\langle[a,b],[b,a]\rangle^G=G'.$$
Now again, $\psi([a,b])=\psi(b_1^{-1}b)=(b^{-1}a,1,\dots,1, a^{-1}b)$. By the same argument as before, we have 
\begin{align*}
\psi_{x_1}(G')&=\psi_{x_1}(\langle\left[a,b\right]\rangle^G)\\
&=\langle b^{-1}a\rangle^G.
\end{align*}
But then, for the vertex $u=x_1x_1\in L_2$ we have that $\psi_{u}(\st_G(L_2))=\langle b^{-1}a\rangle^G$. It is not hard to see that $G/G'\cong C_p\times C_p$ (see \cite[Theorem 2.1]{Alc}). Since the image of $\langle ba^{-1}\rangle^G$ in $G/G'$ is cyclic, we have $\langle ba^{-1}\rangle^G\neq G$, and $G$ is not super strongly fractal. 
\end{proof}

\section{Groups which are super strongly fractal}
In the same family of GGS-groups, we have examples of groups which are super strongly fractal. More specifically, the GGS-groups which are periodic, which is equivalent to having defining vector $e$  such that $e_1+\dots +e_{p-1}\equiv 0\pmod p$ (see \cite[Theorem 1]{Vov}) are an example of super strongly fractal groups.

\begin{proposition}
Let $G$ be a GGS-group with defining vector $e=(e_1,\dots,e_{p-1})$ such that $e_1+\dots+e_{p-1}\equiv0\pmod p$. Then $G$ is super strongly fractal.
\end{proposition}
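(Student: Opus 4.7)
The plan is to apply Lemma \ref{stabrooted} together with Remark \ref{onevertex}: since $a\in G$ is rooted with $a_{(\emptyset)}$ a $p$-cycle, it suffices to produce, for each $n\in\N$, a vertex $u_n\in L_n$ with $\psi_{u_n}(\st_G(L_n))=G$. I would take $u_n=x_p^n$ and prove by induction on $n$ the stronger identity
\[
\psi_{x_p}(\st_G(L_{n+1}))\;=\;\st_G(L_n), \qquad n\geq 0,
\]
since iterating gives $\psi_{u_n}(\st_G(L_n))=\psi_{u_{n-1}}\!\bigl(\psi_{x_p}(\st_G(L_n))\bigr)=\psi_{u_{n-1}}(\st_G(L_{n-1}))=\cdots=G$, with the base $n=0$ being the strong fractality of $G$ from the previous lemma.

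The first key ingredient is a normality argument. For $f\in\st_G(x_p)$ and $g\in\st_G(L_{n+1})$, formula \eqref{conjsect} collapses to $(g^f)_{x_p}=(g_{x_p})^{f_{x_p}}$, and by fractality $f_{x_p}$ ranges over all of $G$ as $f$ varies in $\st_G(x_p)$. Hence $\psi_{x_p}(\st_G(L_{n+1}))\trianglelefteq G$. Combined with the automatic inclusion $\psi_{x_p}(\st_G(L_{n+1}))\subseteq\st_G(L_n)$, it is enough to exhibit a single element $\sigma_n\in\st_G(L_{n+1})$ whose $x_p$-section normally generates $\st_G(L_n)$ in $G$.

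The second key ingredient is the explicit construction of the $\sigma_n$, which is where the periodicity condition enters. For $n=1$, take $\sigma_1=b\cdot b^a\cdot b^{a^2}\cdots b^{a^{p-1}}=b_0b_1\cdots b_{p-1}$. A direct computation using the formula for $\psi(b_i)$ stated in the previous lemma shows that
\[
(\sigma_1)_{x_j}\;=\;a^{A_j}\,b\,a^{-A_j},\qquad A_j=e_1+\cdots+e_j,
\]
where the ``post-$b$'' portion telescopes to $a^{-A_j}$ precisely because $\sum_{k=1}^{p-1}e_k\equiv 0\pmod p$. Each section is thus a conjugate of $b$, hence lies in $\st_G(L_1)$, so $\sigma_1\in\st_G(L_2)$; moreover $(\sigma_1)_{x_p}=b$, and since $\st_G(L_1)=\langle b\rangle^G$, this yields $\psi_{x_p}(\st_G(L_2))=\st_G(L_1)$. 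For $n\geq 2$ one argues analogously by lifting a normal generator of $\st_G(L_n)$ back to $\st_G(L_{n+1})$ through $\psi_{x_p}$, using the inductive hypothesis and the periodicity condition to guarantee the lift stays in the deeper level stabilizer.

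The main obstacle is precisely this iteration for $n\geq 2$: the naive recursion $\sigma_{n+1}=\sigma_n\sigma_n^a\cdots\sigma_n^{a^{p-1}}$ does not in general land in $\st_G(L_{n+2})$, so the construction must be chosen with care (typically using commutators of the $b_i$, or elements supplied by the weak branching structure of periodic GGS-groups) so that the cancellation afforded by $\sum e_k\equiv 0\pmod p$ operates at every level rather than only at the first.
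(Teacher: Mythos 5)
Your skeleton coincides with the paper's: reduce via Lemma \ref{stabrooted} and Remark \ref{onevertex} to producing one good vertex per level, and handle the second level with the explicit element $b_1b_2\cdots b_{p-1}b$, whose first-level sections are conjugates of $b$ precisely because $e_1+\cdots+e_{p-1}\equiv 0\pmod p$. Your computation $(\sigma_1)_{x_j}=a^{A_j}ba^{-A_j}$ and the conclusion $\psi_{x_p}(\st_G(L_2))=\st_G(L_1)$ are correct and are exactly the paper's level-two step (the paper packages your normality argument as Corollary \ref{strongly}, but it is the same observation).

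The genuine gap is the inductive step for $n\geq 2$, and you have in effect flagged it yourself: you assert that $\psi_{x_p}(\st_G(L_{n+1}))=\st_G(L_n)$ can be obtained ``analogously'' by a carefully chosen lift, you concede that the naive candidate $\sigma_n\sigma_n^a\cdots\sigma_n^{a^{p-1}}$ fails, and you do not construct any replacement. This is not a routine detail to be deferred; it is where the substantive input about periodic GGS-groups lives. The paper closes it by quoting \cite[Lemma 3.3]{Alc}: for $n\geq 3$ one has the full product decomposition $\psi(\st_G(L_n))=\st_G(L_{n-1})\times\overset{p}{\dots}\times\st_G(L_{n-1})$, which immediately gives $\psi_x(\st_G(L_n))=\st_G(L_{n-1})$ for every $x\in X$ and makes your telescoping $\psi_{u_n}(\st_G(L_n))=\psi_{u_{n-1}}(\st_G(L_{n-1}))=\cdots=G$ go through. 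That cited lemma is proved using the branch structure of these groups (the level stabilizers eventually contain $\psi^{-1}(K\times\cdots\times K)$ for a suitable normal subgroup $K$ of finite index), which is precisely the ``weak branching structure'' you allude to but do not supply. As written, your induction does not close beyond $n=1$: you must either import a result of this strength or prove an equivalent branching statement for periodic GGS-groups.
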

\begin{proof}
By \cite[Lemma 3.3]{Alc} we know that for $n\geq 3$ we have $\psi(\st_G(L_n))=\st_G(L_{n-1})\times\overset{p}{\dots}\times\st_G(L_{n-1})$. Since we also know that $\psi_x(\st_G(L_1))=G$ for every $x\in X$, it only remains to show that $\psi_x(\st_G(L_2))=\st_G(L_1)$ for each $x\in X$. Since $G$ contains the rooted automorphism $a$, by Remark \ref{onevertex}, it is enough to check the condition in one vertex. 

Let us consider the element $g=b_1b_2\dots b_{p-1}b$. We have 
\begin{align*}
\psi(g)&=(ba^{e_1+\dots+e_{p-1}},a^{e_1+\dots+e_{p-1}}b_{e_2+\dots+e_{p-1}},a^{e_1+\dots+e_{p-1}}b_{e_3+\dots+e_{p-1}},\dots,a^{e_1+\dots+e_{p-1}}b)\\
&=(b,b_{e_2+\dots+e_{p-1}},b_{e_3+\dots+e_{p-1}},\dots,b),
\end{align*} 
so we conclude that $g\in\st_G(L_2)$. On the other hand, since $G$ is strongly fractal by Corollary \ref{strongly} we have $$\psi_{x_1}(\st_G(L_2))\geq \psi_{x_1}(\langle g\rangle^G )=\langle b,b_{e_2+\dots+e_{p-1}},b_{e_3+\dots+e_{p-1}},\dots,b\rangle^G=\st_G(L_1)$$, and we have finished.
\end{proof}

In \cite[page 85]{Gri2}, it is said that being strongly fractal implies being super strongly fractal, and also that the first Grigorchuk group is an example of this. It is true that the first Grigorchuk group is super strongly fractal, but it is not a direct consequence of being strongly fractal. The proof of this is similar to the previous example.

\begin{definition}
Let $T$ be the $2$-adic tree. The first Grigorchuk group, denoted by $\mathcal{G}$, is the group generated by the following automorphisms $a,b,c,d$:
\begin{eqnarray*}
a_{(\emptyset)}=(12), &\psi(a)=(1,1),&\\
b,c,d\in\st(L_1),& \psi(b)=(a,c),&\\
&\psi(c)=(a,d),&\\
&\psi(d)=(1,b).&
\end{eqnarray*}
\end{definition}

\begin{proposition}
The group $\mathcal{G}$ is super strongly fractal.
\end{proposition}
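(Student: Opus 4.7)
The strategy mirrors the preceding proof for GGS-groups with constant defining vector. Since $\mathcal{G}$ contains the rooted automorphism $a$ whose root label is the $2$-cycle $(1\,2)$, Remark~\ref{onevertex} reduces super strong fractality to exhibiting, for each $n\in\N$, one vertex $u_n\in L_n$ such that $\psi_{u_n}(\st_\mathcal{G}(L_n))=\mathcal{G}$. The case $n=1$ is the well-known strong fractality of $\mathcal{G}$: from $\psi(d^a)=(b,1)$ and $\psi(b)=(a,c)$ one reads off that $\psi_1(\st_\mathcal{G}(L_1))$ contains both $a$ and $b$, and hence equals $\mathcal{G}$. I would proceed by induction on $n$.

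For the inductive step I would invoke the branching identity $\psi(\st_\mathcal{G}(L_n))=\st_\mathcal{G}(L_{n-1})\times\st_\mathcal{G}(L_{n-1})$, valid for every $n$ above some small threshold; this plays here the role that \cite[Lemma~3.3]{Alc} did in the GGS proof. It is a standard consequence of $\mathcal{G}$ being regular branch over $K=\langle(ab)^2\rangle^\mathcal{G}$, so that $K\times K\leq\psi(K)$, together with the finiteness of $[\mathcal{G}:K]$. Granting it, $\psi_x(\st_\mathcal{G}(L_n))=\st_\mathcal{G}(L_{n-1})$ for any $x\in X$, and taking $u_n=x\,u_{n-1}$ gives $\psi_{u_n}(\st_\mathcal{G}(L_n))=\psi_{u_{n-1}}(\st_\mathcal{G}(L_{n-1}))=\mathcal{G}$ by the inductive hypothesis.

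The remaining small cases must be verified by hand; the crucial one is $n=2$, for which I take $u_2=11$ and exhibit the four elements $d^a$, $b\,c^a\,b\,c^a$, $b\,d^a\,b$, $b\,b^a\,b\,c^a$ of $\st_\mathcal{G}(L_2)$. A direct computation gives their $\psi$-images as $(b,1)$, $((da)^2,(ca)^2)$, $(b^a,1)$, and $(c^a d,(ca)^2)$ respectively, each pair lying in $\st_\mathcal{G}(L_1)\times\st_\mathcal{G}(L_1)$, confirming membership in $\st_\mathcal{G}(L_2)$. Their first-coordinate sections are $\psi_1(b)=a$, $\psi_1((da)^2)=b$, $\psi_1(b^a)=c$, $\psi_1(c^a d)=d$ (using $\psi((da)^2)=(b,b)$, $\psi(b^a)=(c,a)$, $\psi(c^a d)=(d,ab)$), which are precisely the four generators of $\mathcal{G}$; therefore $\psi_{11}(\st_\mathcal{G}(L_2))=\mathcal{G}$. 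The principal difficulty is supplying the branching identity used in the inductive step; one would either cite a result from the branch-group literature or give a short self-contained argument based on $K\times K\leq\psi(K)$ and the structure of the level stabilizers.
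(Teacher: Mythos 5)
Your overall strategy coincides with the paper's: both reduce to finitely many small levels by means of the identity $\psi(\st_{\mathcal{G}}(L_n))=\st_{\mathcal{G}}(L_{n-1})\times\st_{\mathcal{G}}(L_{n-1})$ for large $n$ (the paper quotes it from \cite[Theorem 4.3]{Bar1}), and both then produce explicit stabilizer elements with prescribed sections at the remaining levels; your level-$2$ witnesses $d^a$, $(bc^a)^2$, $bd^ab$, $bb^abc^a$ are correct and do yield the sections $a,b,c,d$ at the vertex $x_1x_1$. There are, however, two genuine gaps. First, your base case is wrong as stated: $\langle a,b\rangle\neq\mathcal{G}$, since the images of $a$ and $b$ generate only an index-$2$ subgroup of the abelianization $\mathcal{G}/\mathcal{G}'\cong C_2\times C_2\times C_2$. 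This is easily repaired ($\psi_{x_1}(b^a)=c$ and $\psi_{x_1}(c^a)=d$, so $\psi_{x_1}(\st_{\mathcal{G}}(L_1))$ contains all four generators), but the repair is needed.

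Second, and more seriously, level $3$ is not covered. The branching identity you invoke is only available for $n\geq 4$: the cited \cite[Theorem 4.3]{Bar1} is stated in that range, and your own sketch of a self-contained proof from $K\times K\leq\psi(K)$ with $K=\langle(ab)^2\rangle^{\mathcal{G}}$ gives surjectivity of $\psi\colon\st_{\mathcal{G}}(L_n)\to\st_{\mathcal{G}}(L_{n-1})\times\st_{\mathcal{G}}(L_{n-1})$ only once one knows $\st_{\mathcal{G}}(L_{n-1})\leq K$, which holds for $n-1\geq 3$ but is nowhere addressed (finiteness of $[\mathcal{G}:K]$ alone does not suffice). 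So your induction can only start at $n=4$, and the case $n=3$ must be checked by hand, exactly as the paper does with the element $g=(ab)^4(adabac)^2\in\st_{\mathcal{G}}(L_3)$, whose level-$3$ sections include $a$, $ba$, $c$ and $d$. Without such a level-$3$ witness, or a proof that the branching identity already holds at $n=3$, the argument is incomplete.
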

\begin{proof}
In \cite[Theorem 4.3]{Bar1} it is shown that $\psi(\st_{\mathcal{G}}(L_n))=\st_{\mathcal{G}}(L_{n-1})\times \st_{\mathcal{G}}(L_{n-1})$ for $n\geq 4$. Since $a\in G$ by Lemma \ref{stabrooted} it suffices to show that $\langle\psi_{u_n}(\st_{\mathcal{G}}(L_n))\mid u_n\in L_n\rangle=\mathcal{G}$ when $n=1,2,3$.

For $n=1$ it follows from the definition of the elements $b,c,d$. Let us see the cases $n=2$ and $n=3$.

It is easy to calculate and check that $d,(ab)^4,(ac)^4\in\st_{\mathcal{G}}(L_2)$ and that
\begin{align*}
\psi_{x_2x_1}(d)&=a,\\
\psi_{x_2x_2}(d)&=c,\\
\psi_{x_2x_2}((ab)^4)&=ad,\\
\psi_{x_2x_2}((ac)^4)&=b.
\end{align*}

To conclude, the element $g=(ab)^4(adabac)^2$ belongs to $\st_{\mathcal{G}}(L_3)$ and
\begin{align*}
\psi_{x_1x_2x_1}(g)&=d,\\
\psi_{x_1x_2x_2}(g)&=ba,\\
\psi_{x_2x_2x_1}(g)&=a,\\
\psi_{x_2x_2x_2}(g)&=c.
\end{align*}
This proves that $\mathcal{G}$ is super strongly fractal.
\end{proof}

\bibliographystyle{alpha}
\bibliography{main}
\end{document}